\newtheorem{theorem}{Theorem}[section]
\newtheorem{lemma}[theorem]{Lemma}
\newtheorem{claim}[theorem]{Claim}
\theoremstyle{definition}
\newtheorem{remark}[theorem]{Remark}
\newtheorem{define}[theorem]{Definition}
\newtheorem{problem}[theorem]{Problem}
\DeclareMathOperator{\midp}{mid}
\numberwithin{equation}{section}
\newcommand{\df}[1]{\emph{\color{black!70!blue}#1}}
\newcommand{\R}{\mathbb R}
\newcommand{\Ss}{\mathbb S}
\newcommand{\cal}{\mathcal}
\DeclareMathOperator{\conv}{conv}
\DeclareMathOperator{\rk}{rk}
\DeclareMathOperator{\bd}{bd}
\DeclareMathOperator{\join}{GJ}
\tikzset{
    add/.style args={#1 and #2}{
        to path={
 ($(\tikztostart)!-#1!(\tikztotarget)$)--($(\tikztotarget)!-#2!(\tikztostart)$)
  \tikztonodes},add/.default={.2 and .2}}
}  
\tikzset{
  mark coordinate/.style={inner sep=0pt,outer sep=0pt,minimum size=2pt,
    fill=black,circle}
}
\begin{document} 

\title[The Colored Hadwiger Transversal Theorem in $\R^d$]{The Colored
  Hadwiger Transversal Theorem in $\R^d$}  

\author{Andreas F. Holmsen$^\ast$}
\thanks{$^\ast$Supported by the Basic Science Research Program through the National Research Foundation of Korea funded by the Ministry of Education, Science and Technology (NRF-2010-0021048).} 
\address{A.~F.~Holmsen\\ Department of Mathematical Sciences\\ KAIST\\  Daejeon\\ South Korea} 
\email{andreash@kaist.edu}

\author{Edgardo Rold\'an-Pensado$^\dagger$}
\thanks{$^\dagger$Research was conducted while visiting KAIST in Daejeon, South Korea}
\address{E. Rold\'an-Pensado\\ Instituto de Matem\'aticas\\ UNAM\\ M\'exico, D.F.\\ Mexico}
\email{e.roldan@math.ucl.ac.uk}

\subjclass[2010]{Primary 52A35; Secondary 52A20}
\keywords{Colorful theorem, transversal hyperplane, Radon partition,
  Borsuk-Ulam theorem} 

\begin{abstract}
Hadwiger's transversal theorem gives necessary and sufficient conditions for a family of convex sets in the plane to have a line transversal. A higher dimensional version was obtained by Goodman, Pollack and Wenger, and recently a colorful version appeared due to Arocha, Bracho and Montejano. We show that it is possible to combine both results to obtain a colored version of Hadwiger's theorem in higher dimensions. The proofs differ from the previous ones and use a variant of the Borsuk-Ulam theorem. To be precise, we prove the following. Let $F$ be a family of convex sets in $\R^d$ in bijection with a family $P$ of points in $\R^{d-1}$. Assume that there is a coloring of $F$ with sufficiently many colors such that any colorful Radon partition of points in $P$ corresponds to a colorful Radon partition of sets in $F$. Then some monochromatic subfamily of $F$ has a hyperplane transversal. 
\end{abstract} 

\maketitle 

\section{Introduction} 

\subsection{Background}
Many classical theorems of convexity, such as the theorems of Carath\'eodory \cite{Car1907}, Helly \cite{Hel1921}, Kirchberger \cite{Kir1903}, and Tverberg \cite{Tve1966}, admit remarkable \emph{colorful} versions. The first of these, the colorful Carath\'eodory theorem, was discovered by B\'ar\'any \cite{Bar1982} and its dual version, the colorful Helly theorem, was independently discovered by Lov\'asz (see section 3 of \cite{Bar1982}). Apart from their inherent charm, these results also have deep applications which appear to be unaccessible by the classical versions of the theorems (see chapters 8-10 in \cite{Mat2002}). Recently a colorful version of Hadwiger's theorem \cite{Had1957} on common line transversals to families of convex sets in the plane was discovered by Arocha, Bracho, and Montejano \cite{ABM2008}. Just as Hadwiger's theorem has a generalization to hyperplane transversals in any dimension \cite{GP1988,PW1990}, they conjectured that there should also exist a colorful version in higher dimensions. In this note we make the first steps in establishing their conjecture. 

\subsection{Definitions} 
Recall Radon's theorem which states that any set of $k+2$ points in $\R^k$ can be partitioned into two parts whose convex hulls intersect (see \cite{Eck1993}). Moreover, this partition is unique if and only if any $k+1$ of the points are affinely independent. In general, a Radon partition of a set of points in $\R^k$ is a pair of disjoint subsets whose convex hulls intersect. The combinatorial data which records all the Radon partitions of a set of points in $\R^k$ is an invariant of the point set known as its \emph{order-type} (see chapter 9.3 in \cite{Mat2002} and section 2 in \cite{GP1988}). Radon partitions extend to families of sets in $\R^d$ in a straightforward way. Let $F$ be a family of sets in $\R^d$. A \df{Radon partition} of $F$ is a pair of subfamilies $(G_1, G_2)$ of $F$ such that $G_1\cap G_2 = \emptyset$ and $\conv G_1 \cap \conv G_2 \neq \emptyset$, where $\conv G_i$ denotes the convex hull of the union of the members of $G_i$.  

Let $F$ be a family of compact connected sets in $\R^d$. An affine hyperplane which meets every member of $F$ is called a \df{hyperplane transversal}. The relationship between hyperplane transversals and Radon partitions comes from the observation that if $F$ has a hyperplane transversal, then the set of all Radon partitions of $F$ ``spans'' the set of all Radon partitions of a point set $P$ in $\R^k$ for some $k < d$. To see this, simply choose, from each member of $F$, a point contained in the hyperplane transversal. (Naturally, $F$ may have other Radon partitions as well.) The idea behind the ``Hadwiger-type'' theorems is that this necessary condition is also sufficient.

\begin{define}[$k$-ordering]
  Let $F$ be a set or a family of sets. A \df{$k$-ordering} of $F$ is a bijection $\varphi : F \to P$ where $P$ is set of points which affinely span $\R^k$. 
\end{define} 

\begin{define}[Consistent $k$-ordering]
  Let $F$ be a family of sets in $\R^d$. A \df{consistent $k$-ordering} of $F$ is a $k$-ordering which respects the Radon partitions of $\varphi(F)$. That is, 
  \[\conv \varphi(F_1) \cap \conv \varphi(F_2) \neq \emptyset \implies \conv F_1 \cap \conv F_2 \neq \emptyset\]
  for any pair of subfamilies $F_1$ and $F_2$ of $F$.
\end{define}

The Pollack-Wenger theorem can now be stated as follows. 

\begin{theorem}[Pollack-Wenger \cite{PW1990}]
  A family of compact connected sets in $\R^d$ has a hyperplane transversal if and only if $F$ has a consistent $k$-ordering for some $0\leq k \leq d-1$.
\end{theorem}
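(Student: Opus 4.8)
The forward implication is the straightforward one, and is essentially the observation already recorded above. Suppose $F$ has a hyperplane transversal $H$, and identify $H$ with $\R^{d-1}$. For each $K\in F$ choose a point $\varphi(K)\in K\cap H$, and set $k=\dim\operatorname{aff}\varphi(F)\le d-1$. Then $\varphi\colon F\to\varphi(F)$ is a $k$-ordering, and it is consistent: since $\varphi(F_i)\subseteq F_i$ we have $\conv\varphi(F_i)\subseteq\conv F_i$, so every Radon partition of the point set $\varphi(F)$ is automatically a Radon partition of $F$. I would dispose of this direction in a few lines; the entire difficulty lies in the converse.

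For the converse I would argue by contradiction, aiming to manufacture from the consistent $k$-ordering a Radon partition of the points whose two parts can be separated by a hyperplane in $\R^d$ --- which is exactly what consistency forbids. First I would reduce to $F$ finite: the sets are compact, hence contained in a fixed ball $B$, the hyperplanes meeting $B$ form a compact space, and ``meets $K$'' is a closed condition, so the finite intersection property lets me treat only finite subfamilies. Next I would reformulate transversality by direction. For a unit vector $u\in\Ss^{d-1}$ and $K\in F$, connectedness and compactness make $I_K(u)=\big[\min_{x\in K}\langle x,u\rangle,\ \max_{x\in K}\langle x,u\rangle\big]$ a compact interval, and a hyperplane with normal $u$ is a transversal precisely when $\bigcap_{K\in F}I_K(u)\ne\emptyset$. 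So if no transversal exists, then for every $u$ these intervals have empty common intersection, meaning some oriented hyperplane $h_u$ with normal $u$ strictly separates a nonempty family $A(u)$ on its positive side from a nonempty family $B(u)$ on its negative side.

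The heart of the proof --- the step I expect to be the main obstacle --- is to locate a single direction $u_0$ for which the induced splitting realizes a Radon partition of the image points. I would encode Radon partitions of $P=\varphi(F)\subseteq\R^k$ by Gale duality: forming the $(k+1)\times n$ matrix whose $i$-th column is $(p_i,1)$, a kernel vector with positive entries on $A$ and negative entries on $B$ certifies that $\conv\{p_i:i\in A\}$ and $\conv\{p_j:j\in B\}$ meet. The task is then to match the separation data $u\mapsto(A(u),B(u))$, which is antipodally symmetric since replacing $u$ by $-u$ swaps the two sides, against these sign patterns. I would set this up as an odd continuous test map from $\Ss^{d-1}$ into a Euclidean space of dimension at most $d-1$ and invoke a Borsuk--Ulam-type theorem to force a zero; the bound $k\le d-1$ is exactly what keeps the target dimension small enough. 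At such a zero the two families $A(u_0),B(u_0)$ form a Radon partition of $P$, yet by construction they are strictly separated by $h_{u_0}$, so $\conv\big(\bigcup_{A(u_0)}K\big)\cap\conv\big(\bigcup_{B(u_0)}K\big)=\emptyset$, contradicting consistency.

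The delicate points I would have to handle carefully are the continuity and well-definedness of the selection $u\mapsto(h_u,A(u),B(u))$ --- the splitting can jump as $u$ varies and sets enter or leave the separated parts --- which I would control by perturbing $P$ and $F$ into general position and choosing the separating level canonically, for instance by maximizing the separation gap; and the precise construction of the equivariant test map, so that its zeros correspond to genuine Radon partitions rather than to degenerate coincidences. Getting this topological core right, rather than the convexity bookkeeping around it, is where the real work lies.
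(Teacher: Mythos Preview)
Your outline is correct and matches the paper's approach: both reduce to finite families, parametrize oriented hyperplanes by $\Ss^{d-1}$, extract for each direction a pair of strictly separated subfamilies, lift $P$ affinely into $\R^{k+1}$ via $p\mapsto\binom{p}{1}$ so that a Radon partition of the points becomes the condition that the origin lies in a certain convex hull, and finish with Borsuk--Ulam. The paper resolves the discontinuity you anticipate by choosing a canonical \emph{central hyperplane} for each direction and discretizing $\Ss^{d-1}$ into a cell complex $\cal C$ on whose open cells the separated pair is constant; the test map is then built inductively on skeletons, and in the uncolored case the target sets $S(\sigma)=\conv(Q_\sigma)$ (or their slice by a hyperplane $W$ through the origin when $k=d-1$, to bring the target dimension down to $d-1$) are convex, hence trivially contractible.
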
 

The history leading up to the Pollack-Wenger theorem starts with the observation that the case $k=0$ follows from Helly's theorem in $\R^1$. Next, Hadwiger \cite{Had1957} proved the case $d=2$ and $k=1$ under the additional assumption that the members of $F$ are \emph{pairwise disjoint}. More than two decades later, Katchalski extended Hadwiger's theorem to arbitrary dimension, still using the condition of pairwise disjointness. In 1988 Goodman and Pollack \cite{GP1988} proved the case for $k=d-1$ under a condition of \emph{separatedness} generalizing the disjointness condition. It was not until 1990 that Wenger \cite{Wen1990} removed the condition of pairwise disjointness in the case $d=2$ and $k=1$, which immediately implies Katchalski's result as well. Wenger's discovery showed that the condition of disjointness (and separatedness) was in fact a bit misleading, and the Pollack-Wenger theorem served as a common generalization of the various Hadwiger-type results. 

\subsection{The colorful version}
Given a family of sets $F$, an \df{$r$-coloring} of $F$ is a partition of $F$ into $r$ non-empty parts $F = F_1 \cup F_2\cup\dots\cup F_r$. Each $F_i$ is called a \df{monochromatic subfamily} of $F$. A \df{colorful subfamily} of $F$ is a subfamily $G\subset F$ such that $\lvert G\cap F_i\rvert \leq 1$ for all $1\leq i \leq r$. A \df{colorful Radon partition} is a Radon partition $(G_1,G_2)$ such that $G_1\cup G_2$ is colorful.

\begin{define}[Rainbow consistent $k$-ordering]
	Let $F$ be an $r$-colored family of sets in $\R^d$. A \df{rainbow consistent $k$-ordering} of $F$ is a $k$-ordering which respects the colorful Radon partitions of $\varphi(F)$. That is,
	\[
		\conv \varphi(F_1) \cap \conv \varphi(F_2) \neq \emptyset \implies \conv F_1 \cap \conv F_2 \neq \emptyset
	\]
	for any pair of subfamilies $F_1$ and $F_2$ of $F$ where $F_1\cup F_2$ is a colorful subfamily. 
\end{define} 

Arocha, Bracho, and Montejano \cite{ABM2008} discovered the first colorful version of Hadwiger's theorem, or rather a colorful version of Wenger's theorem.

\begin{theorem}
	Let $F$ be a 3-colored family of compact connected sets in $\R^d$. If $F$ has a rainbow consistent 1-ordering, then some monochromatic subfamily of $F$ has a hyperplane transversal. 
\end{theorem}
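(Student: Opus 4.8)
The plan is to reformulate both the conclusion and the hypothesis as statements about coverings of a sphere, and then to extract a monochromatic transversal from the Borsuk--Ulam theorem. First I would reduce to the case where every member of $F$ is convex: replacing each set by its convex hull changes neither its projections onto a line nor the set of hyperplanes meeting it, and it preserves all Radon partitions. The bijection $\varphi$ places the sets at distinct points of a line, so it induces a linear order $\prec$ on $F$. Since a colorful subfamily on a line has at most three members, and three distinct collinear points admit exactly one Radon partition (the outer two against the middle one), the hypothesis is equivalent to a single condition: for every rainbow triple $A\prec B\prec C$ the middle set meets the convex hull of the other two, i.e. $\conv B \cap \conv(A\cup C)\neq\emptyset$.

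Next I would lift to the sphere of oriented hyperplanes. Sending $x\in\R^d$ to $(x,1)\in\R^{d+1}$, an oriented hyperplane becomes a point $\ell\in\Ss^d$, and for each $K\in F$ the two open caps $U_K^{+}=\{\ell:\langle\ell,(x,1)\rangle>0\ \forall x\in K\}$ and $U_K^{-}=-U_K^{+}$ consist of the hyperplanes missing $K$ with $K$ on the positive, respectively negative, side. Each $U_K^{\pm}$ is an intersection of open hemispheres, hence geodesically convex and contained in an open hemisphere, so it is antipode-free; its complement $T_K=\Ss^d\setminus(U_K^{+}\cup U_K^{-})$ is the set of oriented hyperplanes meeting $K$. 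A color class $F_i$ has a transversal exactly when $\bigcap_{K\in F_i}T_K\neq\emptyset$, that is, when the caps of color $i$ do not cover $\Ss^d$. Rewritten in these terms, $\conv B\cap\conv(A\cup C)\neq\emptyset$ says precisely that no oriented hyperplane strictly separates $B$ from $A\cup C$, i.e. $U_B^{+}\cap U_A^{-}\cap U_C^{-}=\emptyset$ (and its antipode) for every rainbow triple $A\prec B\prec C$.

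Now the heart of the argument. Suppose, for contradiction, that no color has a transversal; then each of the three families of caps covers $\Ss^d$. The crucial feature is the antipodal symmetry: replacing $\ell$ by $-\ell$ interchanges the positive and negative sides, so it swaps $U_K^{+}$ and $U_K^{-}$. Using the three covers together with the vanishing triple-intersections supplied by the colorful hypothesis, I would assemble a continuous odd (antipodal) map from $\Ss^d$ into a space of dimension $d-1$ --- concretely, an odd map $\Ss^d\to\R^d$ that the colorful relations force to avoid the origin, equivalently a map into $\Ss^{d-1}$. Since Borsuk--Ulam forbids an odd map $\Ss^d\to\Ss^{d-1}$, this is a contradiction, so some color class must have a transversal. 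The low-dimensional cases $d\le 2$ (where the statement recovers Helly on the line and the planar theorem) would be checked separately, and the passage between strict and non-strict separation handled by a routine compactness argument.

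The main obstacle is exactly this equivariant construction: one must glue the separation data of the three color classes into a single continuous odd map, and it is precisely the colorful emptiness conditions $U_B^{+}\cap U_A^{-}\cap U_C^{-}=\emptyset$ that make the local pieces compatible and force the resulting map to drop one dimension, so that Borsuk--Ulam can be applied. Getting this gluing right --- rather than the reductions or the reformulation --- is where the real work lies, and is where a suitable \emph{variant} of the Borsuk--Ulam theorem must be invoked.
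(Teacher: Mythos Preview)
Your reductions and reformulations are sound: replacing sets by convex hulls, reading the $1$-ordering as a linear order $\prec$, and translating rainbow consistency into the emptiness of the triple intersections $U_B^{+}\cap U_A^{-}\cap U_C^{-}$ for rainbow $A\prec B\prec C$ are all correct. The genuine gap is exactly where you place it yourself: you never construct the odd map. Asserting that the colorful emptiness conditions ``make the local pieces compatible and force the resulting map to drop one dimension'' is not an argument; three antipodal open covers of $\Ss^{d}$ with certain empty triple intersections do not, by any mechanism you indicate, produce an odd map $\Ss^{d}\to\Ss^{d-1}$. You give no candidate for the coordinates of the map, no role for the order $\prec$ in the construction, and no reason continuity and antipodality should hold together. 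As written this is a strategy outline that stops at the only nontrivial step.

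The paper takes a rather different route and actually carries out the construction. First, by the projection inequality $r(d,1)\le r(2,1)$ it suffices to treat $d=2$. There one works on $\Ss^{d-1}=\Ss^{1}$ (directions), not on the sphere of oriented hyperplanes: for each direction $x$ one selects a specific \emph{central hyperplane} $H_x$ orthogonal to $x$ whose two open sides each contain at least $\lceil r/2\rceil=2$ colors. This selection is continuous, antipodal, and constant on the open cells of a finite antipodal cell complex $\cal C$ on $\Ss^{1}$. The $1$-ordering enters on the \emph{target} side: the separated subfamilies $(F_\sigma^{-},F_\sigma^{+})$ are pushed through $\varphi$ to a signed $3$-colored configuration $Q_\sigma\subset\R^{2}$, and to each cell one attaches a set $S(\sigma)$ built from the \emph{geometric join} of $Q_\sigma$. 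Contractibility of these joins (for $k=1$, $r=3$) is what allows an antipodal map to be extended cell by cell from $\Ss^{1}$ into $\R^{1}$; Borsuk--Ulam then yields a cell whose join contains the origin, which unwinds to a colorful Radon partition of $P$ whose preimage is strictly separated by $H_x$, contradicting rainbow consistency. The contrast with your plan is that the paper specifies the target sets $S(\sigma)$ explicitly, proves the connectivity needed for the extension, and thereby manufactures the odd map rather than postulating it.
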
 

As pointed out in \cite{ABM2008}, it is not hard to formulate the colorful version of the Pollack-Wenger theorem (which they conjectured is true). This leads us to the general ``colorful Hadwiger problem''. 

\begin{problem}\label{prob:main}
	For integers $d$ and $k$, $d > k \geq 0$, determine the smallest integer $r = r(d,k)$ such that if $F$ is an $r$-colored family of compact connected sets in $\R^d$ with a rainbow consistent $k$-ordering, then some monochromatic subfamily of $F$ has a hyperplane transversal.
\end{problem} 

Note that existence of such and $r$ implies the Pollack-Wenger theorem. To see this, assume that $F$ is a family of compact connected sets in $\R^d$ and take $r$ monochromatic copies $F_1,\dots,F_r$ of $F$, each of a different color. Clearly, if $F$ has a consistent $k$-ordering then $\bigcup F_i$ has a rainbow consistent $k$-ordering and therefore some $F_i$ has a hyperplane transversal. Since $F_i$ is a copy of $F$, then $F$ also has a hyperplane transversal.

Before getting to our results, let us point out several known bounds for $r(d,k)$. 

\begin{itemize} 
	\item $r(d,k) \geq k+2$. If $F$ is colored by less than $k+2$ colors, then a bijection $\varphi: F \to P$ where $P$ is any set in general position in $\R^k$ is a rainbow consistent $k$-ordering since the set of colorful Radon partitions of $\varphi(F)$ will be empty.
	\item $r(d,k) \geq r(d+1,k)$. If $F$ is an $r$-colored family of compact connected sets in $\R^{d+1}$, let $\pi(F)$ denote the family obtained by projection to $\R^d$. Any rainbow consistent $k$-ordering of $F$ is also a rainbow consistent $k$-ordering of $\pi(F)$, and the preimage of a hyperplane in $\R^d$ is a hyperplane in $\R^{d+1}$.
	\item $r(1,0) = 2$. If $F$ is a 2-colored family in $\R^1$, then a rainbow consistent $0$-ordering simply means that any two members of $F$ of distinct colors have a point in common. The colorful Helly theorem implies that there is a monochromatic subfamily whose members have a point in common. 
	\item $r(2,1) = 3$. This is the colorful Hadwiger theorem of Arocha, Bracho, and Montejano \cite{ABM2008}. 
\end{itemize} 

In this note we present an approach which reduces Problem \ref{prob:main} to showing that a certain type of subsets are contractible. In the uncolored case these subsets are convex, so we obtain a new proof of the Pollack-Wenger theorem. In the colored case, these subsets correspond to what were called \emph{geometric joins} in \cite{BHK2013}, where their topology was studied. Based on results and ideas from \cite{BHK2013} we obtain the following.

\begin{theorem}\label{thm:summary}
	For the function $r(d,k)$ the following bounds hold. 
	\begin{itemize} 
		\item $r(k+2,k) \leq \binom{k+2}{2} + 1$.
		\item $r(4,2) = 4$.
		\item $r(k+1,k) \leq 2(k+1)^2+3$.
	\end{itemize} 
\end{theorem}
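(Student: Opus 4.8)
The plan is to prove the three bounds in Theorem~\ref{thm:summary} by reducing Problem~\ref{prob:main} to a topological contractibility (or connectivity) statement, following the strategy announced in the introduction. The unifying idea is the following dictionary. A $k$-ordering $\varphi : F \to P$ fails to be rainbow consistent precisely when there is a colorful Radon partition $(\varphi(F_1),\varphi(F_2))$ of $P$ that is \emph{not} witnessed by an intersection $\conv F_1 \cap \conv F_2$ in $\R^d$. To certify that \emph{some} monochromatic subfamily has a hyperplane transversal, I would argue contrapositively: assuming no monochromatic subfamily has a transversal, I would build a map from a suitable configuration space (a sphere, or a join of spheres, on which the color classes act) into a space of directed hyperplanes, and use a Borsuk--Ulam-type theorem to force an antipodal coincidence, which decodes into a colorful Radon partition of $P$ with no corresponding intersection in $F$ --- contradicting rainbow consistency.

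The first concrete step is to set up the configuration space and the test map. For each member $C \in F$ and each oriented hyperplane direction $u \in \Ss^{d-1}$, record on which side of the supporting/transversal geometry $C$ lies; because each $C$ is compact and connected, the set of directions realizing a given combinatorial position is well-behaved. The key objects are the \df{geometric joins} $\join$ referenced from \cite{BHK2013}: for a colorful selection of sets, the join of their ``admissible direction'' cells is the subset whose contractibility (in the uncolored case, convexity) drives the argument. I would make precise that a hyperplane transversal of a monochromatic subfamily corresponds to a nonempty common region, and that the absence of such regions across all color classes yields a continuous equivariant map with no equivariant fixed point unless a colorful Radon dependency appears.

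For the three separate bounds I would specialize the counting. For $r(k+2,k)\le\binom{k+2}{2}+1$, the relevant dimension $d=k+2$ means the transversal hyperplanes form a space of dimension one above the base $\R^k$; here I expect the number of colors $\binom{k+2}{2}+1$ to arise as the number of pairs of a $(k+2)$-point configuration plus one, counting the edges of a simplex whose faces index the potential Radon obstructions, and I would invoke the contractibility results of \cite{BHK2013} for the corresponding geometric joins. For $r(4,2)=4$ I would establish the upper bound $r(4,2)\le 4$ by the same topological mechanism with $d=4,k=2$ (where the join topology from \cite{BHK2013} is understood explicitly), and match it with the lower bound $r(4,2)\ge 4$ coming from $r(d,k)\ge k+2$ combined with a direct construction showing three colors do not suffice. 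For $r(k+1,k)\le 2(k+1)^2+3$, the case $d=k+1$ is the ``top'' codimension-one situation; here I would combine the monotonicity bound $r(d,k)\ge r(d+1,k)$ in reverse with a quantitative connectivity estimate for the geometric joins, the extra factor roughly $2(k+1)^2$ reflecting the worst-case number of color classes needed to guarantee the join is sufficiently highly connected for Borsuk--Ulam to apply.

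The hard part will be the topological input: verifying that the geometric joins arising in each case are contractible, or at least connected enough that the equivariant map argument goes through. In the uncolored Pollack--Wenger situation these regions are convex and contractibility is immediate, but in the colored setting the joins can have nontrivial topology, and the precise connectivity bounds from \cite{BHK2013} are exactly what convert into the stated color counts. Matching the Borsuk--Ulam dimension bookkeeping to the combinatorial count of colors --- so that the number of colors forces enough independent ``coordinates'' to guarantee an antipodal coincidence encoding a colorful Radon partition --- is where I expect the main technical friction to lie, and where the three cases genuinely diverge.
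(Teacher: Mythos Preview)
Your high-level contrapositive outline --- assume no monochromatic transversal, build an equivariant map, apply Borsuk--Ulam, decode a colorful Radon partition of $P$ violating rainbow consistency --- matches the paper, but the construction you sketch has the geometric join living in the wrong space, and this is not a cosmetic issue. The join is \emph{not} formed from ``admissible direction cells'' on $\Ss^{d-1}$; rather, for each direction $x\in\Ss^{d-1}$ the paper first chooses a specific \emph{central hyperplane} $H_x$ orthogonal to $x$ (defined by a sweeping argument so that each open side contains members of at least $\lceil r/2\rceil$ colors), reads off the separated subfamilies $(F_x^-,F_x^+)$, and then pushes these forward via $\varphi$ to a signed point set $Q_\sigma\subset\R^{k+1}$ obtained by lifting $P$ to height $\pm 1$. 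The geometric join $S(\sigma)=\join(Q_\sigma)$ therefore sits in $\R^{k+1}$, the \emph{target} side, and the Borsuk--Ulam map is $f:\Ss^{d-1}\to\R^m$ with $f(\sigma)\subset S(\sigma)$, built cell-by-cell using contractibility of $S(\sigma)$. Without the central-hyperplane construction you have no canonical way to associate to a direction a balanced pair of subfamilies, and without placing the join in $\R^{k+1}$ you cannot interpret ``$0\in S(\sigma)$'' as a colorful Radon partition of $P$ whose preimages are strictly separated in $\R^d$.

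Two further gaps in the case analysis. First, the count $\binom{k+2}{2}+1$ is not combinatorics of simplex edges indexing Radon obstructions; it is exactly the number of colors for which \cite{BHK2013} guarantees contractibility of a geometric join in $\R^{k+1}$, and you simply quote that lemma. Second, your plan for $d=k+1$ is wrong: monotonicity $r(d,k)\ge r(d+1,k)$ goes the wrong way to deduce an upper bound on $r(k+1,k)$ from $r(k+2,k)$. The paper instead slices the join with a hyperplane $W$ through the origin separating $P^+$ from $P^-$, obtaining $S(\sigma)=W\cap\join(Q_\sigma)\subset\R^k$, and proves $S(\sigma)$ is \emph{star-shaped} via Krasnoselskii's theorem. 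The bound $2(k+1)^2+3$ is precisely what is needed so that after choosing $k+1$ boundary points (each a colorful $(k+1)$-tuple) there remain unused colors on both sides of $W$ to produce a common visibility point. This Krasnoselskii step is the genuine new idea in the $d=k+1$ case and is absent from your proposal.
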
 

Our proof method also gives a new (and simpler) proof of the Arocha-Bracho-Montejano theorem. 

\section{Proof of Theorem \ref{thm:summary}}

It is sufficient to prove Theorem \ref{thm:summary} for finite families. The general case follows from a standard compactness argument. Moreover, we may assume that the members of $F$ are convex polytopes. To see this, note that a hyperplane meets a compact connected set if and only if it meets the convex hull of the set. Thus we may assume the members of $F$ are convex. Next, we can approximate each convex set $K \in F$ by an inscribed convex polytope $K' \subset K$, forming a new family $F'$ such that the corresponding $k$-ordering, $\varphi'\colon F' \to P$, is rainbow consistent. This follows from the compactness of the members of $F$, and since the polytopes are inscribed, any hyperplane that intersects $K'$ also intersects $K$. So from here on, we assume $F$ is a finite family of polytopes in $\R^d$. 

Let $V$ be the set of vertices of the polytopes in $F$ and $\midp(V)$ the set of midpoints between pairs of points of $V$. For each pair of distinct points $u$ and $v$ in $V \cup \midp(V)$ consider the orthogonal complement $(u-v)^\bot$ which is a hyperplane through the origin in $\R^d$. The set of all such orthogonal complements decomposes $\Ss^{d-1}$ into a regular antipodal cell complex of dimension $d-1$ which is denoted by $\cal C$. The cells of $\cal C$ are \emph{open} and the boundary of $\sigma\in \cal C$, denoted by $\bd(\sigma)$, is a finite union of cells of $\cal C$. Note that $\cal C$ is homeomorphic to a polytopal complex 

The main step consists in assigning to each cell, $\sigma \in \cal C$, a subset $S(\sigma)\subset \R^m$ with the following properties. (The dimension $m$ depends on the values of $d$ and $k$ and will be determined later.)

\begin{description} 
	\item[Antipodality] $S(\sigma) = -S(-\sigma)$ for every $\sigma\in \cal C$.
	\item[Monotonicity] $S(\tau) \subset S(\sigma)$ for every $\tau, \sigma \in \cal C$ with $\tau \subset \bd(\sigma)$.
	\item[Contractibility] $S(\sigma)$ is contractible for every $\sigma \in \cal C$.
\end{description} 

\begin{lemma}\label{lem:hero}
	Suppose the sets $S(\sigma)\subset \R^m$ satisfy antipodality, monotonicity, and contractibility. If $m < d$, then one of the set $S(\sigma)$ contains the origin.
\end{lemma}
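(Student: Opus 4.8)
The plan is to argue by contradiction. Suppose that $0 \notin S(\sigma)$ for every cell $\sigma \in \cal C$, so that each $S(\sigma)$ is a nonempty subset of the punctured space $\R^m \setminus \{0\}$. Under this assumption I would construct a continuous \emph{antipodal} (odd) map $f \colon \Ss^{d-1} \to \R^m \setminus \{0\}$ with the additional feature that $f(\sigma) \subset S(\sigma)$ for every cell. Composing with the radial retraction $x \mapsto x / \abs{x}$, which is itself odd, yields an odd map $\Ss^{d-1} \to \Ss^{m-1}$. Since $m < d$, the Borsuk--Ulam theorem forbids the existence of such a map (there is no odd map $\Ss^{n} \to \Ss^{n'}$ when $n > n'$, and here $d-1 > m-1$), and this contradiction proves the lemma.

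The map $f$ would be built skeleton by skeleton, processing the antipodal pairs of cells of $\cal C$ together. On the $0$-skeleton, for each antipodal pair $\{\sigma, -\sigma\}$ of vertices I pick an arbitrary point $f(\sigma) \in S(\sigma)$ (possible since $S(\sigma)$ is nonempty and avoids $0$) and set $f(-\sigma) = -f(\sigma)$; antipodality of the assignment, $S(-\sigma) = -S(\sigma)$, guarantees $f(-\sigma) \in S(-\sigma)$. Assume inductively that $f$ has been defined and is odd on the $(i-1)$-skeleton, with $f(\tau) \subset S(\tau)$ for every cell $\tau$ of dimension at most $i-1$. Given an $i$-cell $\sigma$, monotonicity gives $S(\tau) \subset S(\sigma)$ for each $\tau \subset \bd(\sigma)$, so the already-defined restriction $f|_{\bd(\sigma)}$ maps the boundary sphere of $\sigma$ into $S(\sigma)$. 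Because $S(\sigma)$ is contractible, this boundary map is null-homotopic and therefore extends to a continuous map $\overline{\sigma} \to S(\sigma)$ on the closed cell. I carry out this extension on one cell from each antipodal pair, and define $f$ on the partner $-\sigma$ by $f(-x) = -f(x)$; antipodality again places the image in $S(-\sigma)$, and the odd structure already present on the boundary makes the two definitions agree. This completes the inductive step and, after reaching dimension $d-1$, produces the desired odd map.

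The technical heart, and the step I expect to require the most care, is the extension over each cell. It relies on two structural facts about $\cal C$: first, that the complex is \emph{regular}, so that each closed cell $\overline{\sigma}$ is a genuine ball whose boundary $\bd(\sigma)$ is a genuine sphere, making ``extend a null-homotopic map from the boundary sphere over the ball'' a legitimate operation; and second, that contractibility of $S(\sigma)$ is exactly the hypothesis needed to guarantee that every map from that sphere into $S(\sigma)$ is null-homotopic, hence extendable. One must also check that the extensions assemble into a globally continuous, odd map---continuity across cell boundaries is automatic from the regular CW structure, while oddness holds by construction since antipodal cells are always distinct (the antipodal map on $\Ss^{d-1}$ is fixed-point free, so no cell equals its own antipode). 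With $f$ in hand, the Borsuk--Ulam theorem closes the argument as above.
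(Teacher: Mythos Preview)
Your proof is correct and follows essentially the same approach as the paper: an inductive skeleton-by-skeleton construction of an odd map $f\colon \Ss^{d-1}\to\R^m$ with $f(\sigma)\subset S(\sigma)$, using monotonicity to push boundary images into $S(\sigma)$ and contractibility to extend over cells, followed by an appeal to Borsuk--Ulam. The only cosmetic difference is that the paper builds $f$ into $\R^m$ directly and invokes the ``every odd map $\Ss^{d-1}\to\R^m$ has a zero'' form of Borsuk--Ulam, whereas you phrase it as a contradiction and use the equivalent ``no odd map $\Ss^{d-1}\to\Ss^{m-1}$'' form.
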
 

\begin{proof}
	We construct a continuous, antipodal map, $f : \Ss^{d-1} \to \R^m$, by building it up inductively on the skeletons of $\cal C$. First define the map $f$ on the $0$-skeleton by choosing, for every $v\in \Ss^{d-1}$ corresponding to a $0$-cell of $\cal C$, an arbitrary point $y\in S(v)$, and set $f(v) = y$ and $f(-v) = -y$. Now suppose $f$ has been defined on the $k$-skeleton of $\cal C$ in such a way that for every cell $\tau$, its image $f(\tau)$ is contained in $S(\tau)$. Let $\sigma \in \cal C$ be a $(k+1)$-cell. The function $f$ has already been defined on $\bd(\sigma)$, which is homeomorphic to the $k$-sphere, and the monotonicity property implies that the image $f(\bd(\sigma)) = \bigcup_{\tau \subset \bd(\sigma)} f(\tau)$ is contained in $S(\sigma)$. Since $S(\sigma)$ is contractible, it is necessarily $k$-connected, and therefore $f$ can be extended continuously on all of $\sigma$ such that its image lies in $S(\sigma)$. Once $f$ has been defined on $\sigma$, extend antipodally on $-\sigma$. We can extend $f$ to the entire $(k+1)$-skeleton by repeating the procedure for every $(k+1)$-cell. We therefore have a continuous, antipodal map, $f : \Ss^{d-1} \to \R^m$. If $m < d$, then $f$ has a zero by the Borsuk-Ulam theorem (see e.g. \cite{Mat2003}) implying that there is a cell $\sigma \in \cal C$ such that $S(\sigma)$ contains the origin.
\end{proof} 

\begin{remark}
	The contractibility condition in Lemma \ref{lem:hero} can be replaced by the following weaker condition: If $\sigma \in \cal C$ is a $k$-cell then $S(\sigma)$ is $(k-1)$-connected.
\end{remark}

\subsection{Construction of \texorpdfstring{$S(\sigma)$}{S(s)}}
Let $F$ be a finite $r$-colored family of convex polytopes in $\R^d$. Suppose that no monochromatic subfamily of $F$ has a hyperplane transversal. 

\subsubsection{Separated subfamilies}
Identify the point $\textbf x = (x,t) \in \Ss^{d-1}\times \R$ with the hyperplane $H(\textbf x) = \{v\in \R^d : v\cdot x = t\}$, which should be thought of as an \emph{oriented} hyperplane, in the sense that $H(\textbf x)$ bounds a \emph{negative} and a \emph{positive} half-space, where the direction $x\in \Ss^{d-1}$ points to the positive side. Thus, the hyperplanes corresponding to $\textbf x$ and $-\textbf x$ determine the same point set but have reverse orientations. The space of all oriented hyperplanes in $\R^d$ is parametrized by $\Ss^{d-1}\times \R$ and comes equipped with the natural topology. In other words, we consider the space of all oriented hyperplanes as a ``$\mathbb{Z}_2$-space''.

For every oriented hyperplane $H \subset \R^d$ there is a corresponding ordered pair of \df{separated subfamilies} $(F_1,F_2)$, where $F_1 \subset F$ consists of the members contained in \emph{open negative} side of $H$, and $F_2 \subset F$ the members in \emph{open positive} side. The map $\textbf x \mapsto (F_1,F_2)$ is a ``$\mathbb Z_2$-map'' in the sense that if $\textbf x$ is mapped to $(F_1, F_2)$, then $-\textbf x$ is mapped to $(F_2,F_1)$. We write $(F_1,F_2) \subset (F_1',F_2')$ if $F_1\subset F_1'$ and $F_2\subset F_2'$, and $(F_1,F_2) = (F_1',F_2')$ in the case of equality. 

\begin{claim} \label{claim:neib}
	Every $\textbf x \in \Ss^{d-1}\times \R$ is contained in an open neighborhood $N(\textbf x)$ such that if $\textbf x$ corresponds to the separated subfamilies $(F_1,F_2)$ and $\textbf x'$ corresponds to the separated subfamilies $(F_1',F_2')$, then $(F_1,F_2)\subset (F_1',F_2')$ for any $\textbf x'\in N(\textbf x)$.
\end{claim}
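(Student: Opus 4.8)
The plan is to observe that lying strictly on a prescribed side of an oriented hyperplane is an \emph{open} condition on the hyperplane, so that the separated subfamilies can only grow under a small perturbation. First I would reduce the statement about polytopes to one about their vertices: since each member $K\in F$ is a convex polytope with finite vertex set $V(K)$, the polytope $K$ lies in the open negative side of $H(\textbf x)$ precisely when every vertex of $K$ does, and likewise for the open positive side.

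Concretely, for $\textbf x = (x,t)$ and $K\in F$ I would introduce the two functions
\[
  g_K^-(\textbf x) = \max_{v\in V(K)}(v\cdot x - t),
  \qquad
  g_K^+(\textbf x) = \min_{v\in V(K)}(v\cdot x - t),
\]
each being a maximum (respectively minimum) of finitely many continuous functions of $\textbf x$, hence continuous. By definition $K\in F_1$ exactly when $g_K^-(\textbf x) < 0$, and $K\in F_2$ exactly when $g_K^+(\textbf x) > 0$.

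The key step is then a routine openness argument. If $K\in F_1$ then $g_K^-(\textbf x) < 0$, and by continuity there is an open neighborhood $N_K$ of $\textbf x$ on which $g_K^-$ remains negative, so $K\in F_1'$ throughout $N_K$. Dually, if $K\in F_2$ then $g_K^+(\textbf x) > 0$ stays positive on some open neighborhood $N_K$, giving $K\in F_2'$ there. Since $F$ is finite, I would set $N(\textbf x) = \bigcap_{K\in F_1\cup F_2} N_K$, a finite intersection of open neighborhoods and hence itself an open neighborhood of $\textbf x$. For every $\textbf x'\in N(\textbf x)$, each member of $F_1$ still lies in the open negative side and each member of $F_2$ still lies in the open positive side, which is exactly $(F_1,F_2)\subset (F_1',F_2')$.

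There is no genuine obstacle here; the point worth keeping in mind is that the claim is deliberately one-directional. A member of $F$ that meets $H(\textbf x)$, or touches it from one side, belongs to neither $F_1$ nor $F_2$, and such a member may be pushed onto either side after perturbation. This is harmless because we only assert containment, not equality: the \emph{strictly} separated members are stable, and that is all that the monotonicity of $\textbf x\mapsto (F_1,F_2)$ requires. Compactness of the polytopes, in the form of finiteness of each $V(K)$ together with finiteness of $F$, is precisely what lets the finitely many openness conditions be combined into a single neighborhood $N(\textbf x)$.
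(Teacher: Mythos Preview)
Your argument is correct and follows essentially the same approach as the paper: both exploit that strict separation from a hyperplane is an open condition that varies continuously with $\textbf x$, and both use finiteness of the family to intersect finitely many neighborhoods. The paper phrases this via the positive distance from each member of $F_1\cup F_2$ to $H(\textbf x)$, while you make it explicit through the signed functions $g_K^\pm$ on vertices, but the underlying idea is identical.
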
 

\begin{proof}
	Each member of $F_1 \cup F_2$ has some positive distance to the hyperplane $H(\textbf x)$, and since $\lvert F_1 \cup F_2\rvert$ is finite, a minimum distance is achieved. The distance to each member varies continuously with $\textbf x$, and consequently the distance from each member of $F_1\cup F_2$ to $H(\textbf x')$ remains positive for any $\textbf x'$ sufficiently close to $\textbf x$.
\end{proof} 

\subsubsection{Central hyperplanes}
We now define a specific hyperplane for every direction $x\in \Ss^{d-1}$ as follows. Consider an oriented hyperplane orthogonal to $x$ such that every member of $F$ is contained on its positive side. Start translating the hyperplane in the direction $x$ until the first time its closed negative side contains members of $F$ of \emph{at least $\lceil \frac{r}{2} \rceil$ distinct colors}, and denote this hyperplane $H_1$. Similarly, starting with a hyperplane which contains every member of $F$ on its negative side, we translate it in the direction $-x$ until the first time its closed positive side contains members of $F$ of \emph{at least $\lceil \frac{r}{2} \rceil$ distinct colors}, and denote this hyperplane $H_2$.

The assumption that no monochromatic subfamily of $F$ has a hyperplane transversal implies that $H_2$ is contained in the open positive side of $H_1$. If this were not the case, the ordered pair of separated subfamilies $(F_1,F_2)$ associated with $H_1$ would both contain strictly less than $\lceil \frac{r}{2} \rceil$ colors, implying that $F_1 \cup F_2$ contains strictly less than $r$ colors, hence $H_1$ is a hyperplane transversal to some monochromatic subfamily. For the direction $x \in \Ss^{d-1}$, let the \df{central hyperplane in the direction $x$} be the oriented hyperplane which is orthogonal to the direction $x$ and lies halfway between $H_1$ and $H_2$.

\begin{claim} \label{claim:separations}
	For $x \in \Ss^{d-1}$, let $H_x$ be the central hyperplane in the direction $x$ and $(F_x^-,F_x^+)$ the associated separated subfamilies. The following hold.
	\begin{enumerate}
		\item The map $x\mapsto H_x$ is a continuous, antipodal map from $\Ss^{d-1}$ to $\Ss^{d-1}\times \R$.
		\item $H_x$ passes through a midpoint determined by the vertices of the members of $F$.
		\item $F_x^-\cup F_x^+$ contains members of every color.
		\item $F_x^- $ and $F_x^+$ each contain members of at least $\lceil \frac{r}{2} \rceil$ distinct colors.
		\item $F_x^- = F_{-x}^+$ and $F_x^+ = F_{-x}^-$. 
	\end{enumerate} 
\end{claim}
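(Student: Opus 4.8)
The plan is to read off all five statements directly from the explicit sweep construction of $H_x$, handling the antipodal facts (5) and the antipodal half of (1) together, then continuity, then the remaining structural properties. The analytic heart is continuity, and everything else follows from bookkeeping about which side of $H_x$ a member lies on.

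I would first treat antipodality. The construction for direction $-x$ is the mirror of the one for $x$: the initial hyperplane carrying all of $F$ on its positive side with respect to $-x$ is exactly the initial hyperplane carrying all of $F$ on its negative side with respect to $x$, and sweeping in direction $-x$ retraces the sweep defining $H_2$. Thus the roles of $H_1$ and $H_2$ are exchanged, so their midpoint is the same unoriented hyperplane for $x$ and for $-x$, only with reversed orientation. In the parametrization $\Ss^{d-1}\times\R$ this reads $H_{-x}=-H_x$, which is the antipodal part of (1); and since the open negative side of $H_x$ coincides with the open positive side of $H_{-x}$, we obtain $F_x^-=F_{-x}^+$ and $F_x^+=F_{-x}^-$, which is (5).

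For the continuity in (1), which I expect to be the main obstacle, the idea is to exhibit the level of $H_x$ as a continuous function of $x$. For each color $c$ set $h_c(x)=\min_K\max_{v\in K}v\cdot x$, the minimum over members $K$ of color $c$ of the support function of $K$; this is the level at which color $c$ first enters the closed negative side during the sweep. Each support function is continuous (indeed convex) in $x$, and a minimum over finitely many members preserves continuity, so each $h_c$ is continuous. The level of $H_1$ is then the $\lceil\frac{r}{2}\rceil$-th smallest of $h_1(x),\dots,h_r(x)$, an order statistic of finitely many continuous functions and hence continuous; a symmetric argument with $h_c'(x)=\max_K\min_{v\in K}v\cdot x$ handles the level of $H_2$. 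As the level of $H_x$ is the average of the two, $x\mapsto H_x$ is continuous. Property (2) then falls out: at the critical level the newly admitted member attains its support value at a vertex, so $H_1$ passes through some $v_1\in V$ and $H_2$ through some $v_2\in V$; as both hyperplanes are orthogonal to $x$, their midpoint $H_x$ passes through $\tfrac12(v_1+v_2)\in\midp(V)$, which is precisely why $\midp(V)$ was built into $\cal C$.

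It remains to prove (4) and (3). For (4), recall that $H_2$ lies in the open positive side of $H_1$ (established just before the claim via the no-transversal hypothesis), so the level of $H_x$ strictly exceeds that of $H_1$; every member lying in the closed negative side of $H_1$ therefore lies in the open negative side of $H_x$, whence $F_x^-$ contains the at least $\lceil\frac{r}{2}\rceil$ colors guaranteed by the construction, and symmetrically for $F_x^+$. For (3), suppose some color $c$ occurs in neither $F_x^-$ nor $F_x^+$. Then no member of color $c$ lies entirely in the open negative side and none entirely in the open positive side, so, being a connected polytope, each such member must meet $H_x$; but then $H_x$ is a hyperplane transversal of the monochromatic subfamily of color $c$, contradicting the standing assumption. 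Hence $F_x^-\cup F_x^+$ meets every color. Throughout (3) and (4) the only point requiring care is keeping the open-versus-closed side distinctions consistent.
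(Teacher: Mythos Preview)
Your proof is correct and follows essentially the same approach as the paper. The paper's own proof is terser---for instance, it dispatches continuity with the single phrase ``continuity follows from the continuity of the distance function''---whereas you make this precise by expressing the levels of $H_1$ and $H_2$ as order statistics of the functions $h_c(x)=\min_K\max_{v\in K}v\cdot x$ and $h_c'(x)=\max_K\min_{v\in K}v\cdot x$; this is a welcome elaboration of the same underlying idea rather than a different method.
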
 

\begin{proof}
	For part \textit{(1)}, continuity follows from the continuity of the distance function, while the antipodality follows from the symmetry in the definition of the hyperplanes $H_1$ and $H_2$ (in the definition of the central hyperplane). For part \textit{(2)} we observe that the hyperplanes $H_1$ and $H_2$ are supporting tangents of members of $F$, and therefore must each contain at least one vertex of a member of $F$. Since $H_x$ lies halfway between $H_1$ and $H_2$, it must pass through the midpoint of these vertices. Part \textit{(3)} is just the assumption that no monochromatic subfamily has a hyperplane transversal. Part \textit{(4)} follows from the observation that $H_1$ lies in the open negative side of $H_x$ while $H_2$ lies in the open positive side. Part \textit{(5)} is a consequence of the antipodality of the map $x\mapsto H_x$. 
\end{proof} 

\begin{claim}\label{claim:cells}
	Let $\sigma$ be a cell of $\cal C$. Then $(F_x^-,F_x^+) = (F_y^-,F_y^+)$ for all $x$ and $y$ in $\sigma$.
\end{claim}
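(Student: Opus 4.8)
The plan is to show that the separated subfamilies $(F_x^-,F_x^+)$ depend only on the combinatorial data recorded by the cell $\sigma$, namely the signs of the inner products $(u-w)\cdot x$ for pairs $u,w \in V \cup \midp(V)$. By construction of $\cal C$, each such sign is constant as $x$ ranges over a fixed cell $\sigma$: either $\sigma \subset (u-w)^\bot$, or $\sigma$ lies strictly on one side of this hyperplane. This single fact drives the whole argument.

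First I would unwind the definitions in terms of linear functionals. For a polytope $K \in F$, membership in $F_x^-$ (resp. $F_x^+$) is equivalent to $\max_v v\cdot x < t_x$ (resp. $\min_v v\cdot x > t_x$), where the extrema are taken over the vertices $v$ of $K$ and $t_x$ is the height of the central hyperplane $H_x$ in direction $x$. Since any two vertices $v,v'$ of $K$ lie in $V$, the comparison between $v\cdot x$ and $v'\cdot x$ is governed by the constant sign of $(v-v')\cdot x$ on $\sigma$; hence a maximizing vertex $v_K^*$ and a minimizing vertex $w_K^*$ may be chosen uniformly over $\sigma$, and $\max_v v\cdot x = v_K^*\cdot x$, $\min_v v\cdot x = w_K^*\cdot x$ are single linear functions on $\sigma$.

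Next I would argue that the heights $t_1,t_2$ defining $H_1,H_2$, and therefore $t_x = \tfrac12(t_1+t_2)$, are likewise linear on $\sigma$ and pass through fixed points of $V\cup\midp(V)$. The recipe for $H_1$ sweeps the hyperplane and stops when the closed negative side first captures $\lceil \tfrac r2\rceil$ colors; this stopping rule depends only on the order in which the members enter the negative side, i.e. on the order of the values $v_K^*\cdot x$. Comparing $v_K^*\cdot x$ with $v_{K'}^*\cdot x$ is again a constant sign condition on $\sigma$, so this ordering, and with it the member $K_1$ whose vertex first brings in the $\lceil\tfrac r2\rceil$-th color, is constant on $\sigma$. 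Thus $t_1 = v_1\cdot x$ for a fixed vertex $v_1$, and symmetrically $t_2 = v_2\cdot x$, so that $t_x = \midp(v_1,v_2)\cdot x = p\cdot x$ for the fixed point $p = \midp(v_1,v_2)\in\midp(V)$.

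Finally I would assemble the pieces. Membership $K\in F_x^-$ reads $(v_K^*-p)\cdot x < 0$ and $K\in F_x^+$ reads $(w_K^*-p)\cdot x > 0$; since $v_K^*,w_K^*\in V$ and $p\in\midp(V)$, both are sign conditions on differences of points of $V\cup\midp(V)$, hence constant on $\sigma$. Therefore each membership is decided uniformly over $\sigma$, giving $(F_x^-,F_x^+) = (F_y^-,F_y^+)$ for all $x,y\in\sigma$. The step I expect to require the most care is the third one: verifying that the color-counting stopping rule defining $H_1$ and $H_2$ is insensitive to ties among the values $v_K^*\cdot x$ (which occur exactly when $\sigma$ lies inside one of the defining hyperplanes), so that $K_1$, $K_2$, and hence $v_1,v_2$, are genuinely well-defined on all of $\sigma$. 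This is precisely where the inclusion of the midpoints $\midp(V)$ in the definition of $\cal C$ becomes essential: without it the central threshold $t_x = p\cdot x$ would not correspond to one of the hyperplanes cutting out the complex, and the final sign conditions need not be constant.
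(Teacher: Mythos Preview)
Your argument is correct and takes a more explicit, combinatorial route than the paper's. The paper gives a short continuity argument: as $x$ varies, a change in $(F_x^-,F_x^+)$ can occur only at an instant when some member of $F$ becomes tangent to $H_x$, so that $H_x$ contains a vertex $v\in V$; since $H_x$ always passes through some midpoint $m\in\midp(V)$ (part \textit{(2)} of the preceding claim), one then has $(v-m)\cdot x=0$, placing $x$ on one of the hyperplanes defining $\cal C$ and hence outside the open cell $\sigma$. You instead unwind the definitions and verify directly that every ingredient---the extremal vertices $v_K^*,w_K^*$ of each polytope, the ordering of the values $v_K^*\cdot x$ governing the sweep, the threshold vertices $v_1,v_2$ selected by the color-counting rule, the midpoint $p=\midp(v_1,v_2)$, and finally the membership inequalities $(v_K^*-p)\cdot x<0$ and $(w_K^*-p)\cdot x>0$---is controlled by signs of differences of points in $V\cup\midp(V)$, which are constant on $\sigma$ by construction. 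What this buys you is a self-contained verification that works uniformly for cells of every dimension, handles ties cleanly, and makes transparent exactly why the midpoints must be included in the construction of $\cal C$; the paper's version is terser but leans on the continuity of $x\mapsto H_x$ and leaves the role of $\midp(V)$ more implicit.
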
 

\begin{proof}
	Notice that a change in $F_x^-$ (or $F_x^+$) occurs only if some member of $F$ becomes tangent to $H_x$ (as $x$ varies continuously). When this happens, $H_x$ passes through a vertex $v\in V$, and by Claim \ref{claim:separations} \textit{(2)}, $H_x$ also passes through a midpoint $m\in \midp(V)$. This means that when $H_x$ becomes tangent to $v$, the vector $x$ will enter the orthogonal complement $(v-m)^\bot$, thus leaving the open cell $\sigma$.
\end{proof} 

In view of Claim \ref{claim:cells}, the ordered pairs of separated subfamilies may be associated with the cells of $\cal C$ (rather than the points of $\Ss^{d-1}$). For a cell $\sigma \in \cal C$ we write $(F_\sigma^-,F_\sigma^+)$.

\begin{claim}\label{claim:monot}
	Let $\tau$ and $\sigma$ be cells of $\cal C$. If $\tau \subset \bd(\sigma)$, then $(F_\tau^-,F_\tau^+) \subset (F_\sigma^-,F_\sigma^+)$.
\end{claim}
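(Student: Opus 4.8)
The plan is to combine the continuity of the central-hyperplane map from Claim~\ref{claim:separations}~\textit{(1)} with the semicontinuity of separated subfamilies recorded in Claim~\ref{claim:neib}, using the fact that $\tau \subset \bd(\sigma)$ forces each point of $\tau$ to lie in the closure $\overline{\sigma}$. Write $\Phi \colon \Ss^{d-1} \to \Ss^{d-1}\times\R$ for the continuous map $x \mapsto H_x$, so that $(F_x^-,F_x^+)$ are precisely the separated subfamilies determined by the oriented hyperplane $\Phi(x)$.

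First I would fix a point $x \in \tau$ and apply Claim~\ref{claim:neib} at the point $\textbf x = H_x \in \Ss^{d-1}\times\R$. This yields an open neighborhood $N(H_x)$ with the property that every oriented hyperplane lying in $N(H_x)$ determines a pair of separated subfamilies containing $(F_x^-,F_x^+)$. Next, using the continuity of $\Phi$, I would pull $N(H_x)$ back to an open neighborhood $U$ of $x$ in $\Ss^{d-1}$ satisfying $\Phi(U) \subset N(H_x)$.

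Since $\tau \subset \bd(\sigma) \subset \overline{\sigma}$, the point $x$ lies in the closure of $\sigma$, so $U \cap \sigma \neq \emptyset$; I would choose any $y \in U\cap\sigma$. Then $H_y = \Phi(y) \in N(H_x)$, so $(F_x^-,F_x^+) \subset (F_y^-,F_y^+)$. Finally, Claim~\ref{claim:cells} identifies $(F_x^-,F_x^+) = (F_\tau^-,F_\tau^+)$ and $(F_y^-,F_y^+) = (F_\sigma^-,F_\sigma^+)$, giving $(F_\tau^-,F_\tau^+) \subset (F_\sigma^-,F_\sigma^+)$, which is the desired conclusion.

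The argument is essentially formal once the three preceding claims are available, so I do not anticipate a serious obstacle. The one point requiring care is the interface between the two parametrizations: Claim~\ref{claim:neib} is stated in the space of oriented hyperplanes $\Ss^{d-1}\times\R$, whereas the cells $\tau$ and $\sigma$ live on $\Ss^{d-1}$, so the neighborhood must be transported across $\Phi$ rather than used directly on the sphere. A secondary detail is the observation that $\tau \subset \bd(\sigma)$ guarantees points of $\sigma$ arbitrarily close to any point of $\tau$; this is immediate because the boundary of a cell is contained in its closure.
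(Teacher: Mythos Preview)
Your argument is correct and follows essentially the same approach as the paper: both combine Claim~\ref{claim:neib} with the fact that points of $\sigma$ accumulate at any point of $\tau$, then invoke Claim~\ref{claim:cells} to pass from points to cells. You are in fact more explicit than the paper about pulling the neighborhood from Claim~\ref{claim:neib} back along the continuous map $x\mapsto H_x$ of Claim~\ref{claim:separations}~\textit{(1)}, a step the paper leaves implicit.
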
 

\begin{proof}
	For any point $x\in \tau$ there is an open neighborhood $N(x)\subset \Ss^{d-1}$ such that for all $y\in N(x)$ we have $y\in \sigma$ for some $\sigma \in \cal C$ with $\tau\subset \bd(\sigma)$. The statement now follows by taking the intersection with the open neighborhood from Claim \ref{claim:neib}.
\end{proof}

\subsubsection{The colorful Radon partitions}
	Let $P$ be a set of points which affinely spans $\R^k$ and $\varphi : F \to P$ a $k$-ordering. Let $P^+\in \R^{k+1}$ be the set of points obtained by adding a coordinate with value $1$ to the end of each point in $P$, and let $P^- = -P^+$. For the cell $\sigma \in \cal C$, with associated separated subfamilies $(F_\sigma^-,F_\sigma^+)$, define sub-configurations $Q_\sigma^- \subset P^-$ and $Q_\sigma^+ \subset \cup P^+$ as
	\[
		Q_\sigma^- := \left\{ -\binom{p}{1} \, : \, \varphi(p) \in F_\sigma^- \right\} \quad \text{ and } \quad Q_\sigma^+ := \left\{ \binom{p}{1} \, : \, \varphi(p) \in F_\sigma^+ \right\} 
	\]

\begin{lemma}\label{lem:mono-anti}
	Let $Q_\sigma = Q_\sigma^- \cup Q_\sigma^+$. The following hold. 
	\begin{itemize} 
		\item $Q_\sigma = -Q_{-\sigma}$ for every $\sigma \in \cal C$. (Antipodality) 
		\item $Q_\tau \subset Q_\sigma$ for every $\tau, \sigma \in \cal C$ with $\tau\subset \bd(\sigma)$. (Monotonicity)
	\end{itemize} 
\end{lemma}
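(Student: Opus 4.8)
The plan is to reduce both assertions to the combinatorial statements already established for the separated subfamilies, since the configurations $Q_\sigma^\pm$ are defined purely in terms of membership in $F_\sigma^-$ and $F_\sigma^+$. No topology or geometry beyond the earlier claims is needed; the content is entirely bookkeeping with the lifting map $p \mapsto \binom{p}{1}$ and the sign convention distinguishing $P^+$ from $P^- = -P^+$.

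For \textbf{antipodality}, I would start from Claim \ref{claim:separations}(5), which gives $F_x^- = F_{-x}^+$ and $F_x^+ = F_{-x}^-$ pointwise; since the separated subfamilies are constant on each cell by Claim \ref{claim:cells}, and $-\sigma$ is exactly the cell traced by the antipodes of the points of $\sigma$, this lifts to $F_{-\sigma}^- = F_\sigma^+$ and $F_{-\sigma}^+ = F_\sigma^-$. Now I would unwind the definitions. Applying a global negation to $Q_{-\sigma}^-$ turns each generator $-\binom{p}{1}$ into $\binom{p}{1}$, while the indexing condition remains that the set corresponding to $p$ lies in $F_{-\sigma}^- = F_\sigma^+$; this is precisely the defining condition for $Q_\sigma^+$, so $-Q_{-\sigma}^- = Q_\sigma^+$. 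Symmetrically, $-Q_{-\sigma}^+ = Q_\sigma^-$, using $F_{-\sigma}^+ = F_\sigma^-$. Taking the union gives $-Q_{-\sigma} = Q_\sigma^+ \cup Q_\sigma^- = Q_\sigma$.

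For \textbf{monotonicity}, I would invoke Claim \ref{claim:monot}, which for $\tau \subset \bd(\sigma)$ gives the componentwise inclusions $F_\tau^- \subset F_\sigma^-$ and $F_\tau^+ \subset F_\sigma^+$. Because $Q_\tau^\pm$ and $Q_\sigma^\pm$ use identical generators $\pm\binom{p}{1}$ and differ only in which points $p$ are admitted, namely those whose corresponding set lies in the respective subfamily, enlarging the subfamily enlarges the configuration: $Q_\tau^- \subset Q_\sigma^-$ and $Q_\tau^+ \subset Q_\sigma^+$. Taking unions yields $Q_\tau \subset Q_\sigma$.

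The step requiring the most care is the sign accounting in the antipodal case: one must keep straight that the minus sign built into the definition of $Q^-$ (via $P^-$) combines with the external negation and with the swap $F_\sigma^- \leftrightarrow F_{-\sigma}^+$, so that on the $Q^-$ block the two minus signs cancel and produce the $Q^+$ block, and vice versa. Once this three-way interaction of signs is tracked correctly, both identities follow immediately, so I do not anticipate any genuine obstacle.
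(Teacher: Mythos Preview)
Your proposal is correct and follows exactly the paper's approach: antipodality is deduced from Claim~\ref{claim:separations}(5) (via Claim~\ref{claim:cells} to pass from points to cells), and monotonicity from Claim~\ref{claim:monot}. The paper's proof is the one-line version of what you wrote out in detail.
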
 

\begin{proof}
	Antipodality follows from Claim \ref{claim:separations} \textit{(5)}, while monotonicity follows from Claim \ref{claim:monot}. 
\end{proof} 

Since each point in $Q_\sigma$ corresponds to a unique member of $F$, Claim \ref{claim:separations} \textit{(3)} implies that there is a natural $r$-coloring of $Q_\sigma$. We may therefore speak of the \emph{colorful subsets} of $Q_\sigma$. A crucial observation is the following.

\begin{lemma} \label{lem:norigin}
	If $Q$ is a colorful subset of $Q_\sigma$ and the convex hull of $Q$ contains the origin, then $\varphi: F \to P$ is not rainbow consistent.
\end{lemma}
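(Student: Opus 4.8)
The plan is to read the hypothesis $0 \in \conv(Q)$ in reverse: a vanishing convex combination of a colorful subset of $Q_\sigma$ should decode into a colorful Radon partition of $P$ whose two parts correspond to subfamilies of $F$ that lie on opposite open sides of the central hyperplane. Since such separated subfamilies have disjoint convex hulls, this Radon partition of $P$ cannot lift to one in $F$, which is exactly what it means for $\varphi$ to fail to be rainbow consistent.

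First I would decompose $Q = Q^+ \cup Q^-$ with $Q^+ \subset Q_\sigma^+$ and $Q^- \subset Q_\sigma^-$, and let $A, B \subset P$ be the points whose lifts $\binom{p}{1}$ and $-\binom{p}{1}$ make up $Q^+$ and $Q^-$. Put $G_1 = \varphi^{-1}(A) \subset F_\sigma^+$ and $G_2 = \varphi^{-1}(B) \subset F_\sigma^-$. Since a member of $F$ cannot lie on both open sides of the central hyperplane, $F_\sigma^+$ and $F_\sigma^-$ are disjoint, so $G_1$ and $G_2$ are disjoint; and as $Q$ is colorful, $G_1 \cup G_2$ is a colorful subfamily of $F$. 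I would then rewrite the hypothesis as
\[
  \sum_{p \in A} \lambda_p \binom{p}{1} \;=\; \sum_{p \in B} \mu_p \binom{p}{1},
  \qquad \lambda_p,\, \mu_p \ge 0, \qquad \sum_{p \in A} \lambda_p + \sum_{p \in B} \mu_p = 1,
\]
and read off the final coordinate to obtain $\sum_{p \in A} \lambda_p = \sum_{p \in B} \mu_p = \tfrac12$ (in particular both $A$ and $B$ are nonempty), and the first $k$ coordinates to obtain $\sum_{p \in A} \lambda_p\, p = \sum_{p \in B} \mu_p\, p$. Rescaling the coefficients by a factor of $2$ turns them into convex weights on $A$ and on $B$ and exhibits a common point of $\conv(A)$ and $\conv(B)$. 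Hence $(G_1, G_2)$ is a colorful Radon partition of $\varphi(F) = P$, that is, $\conv \varphi(G_1) \cap \conv \varphi(G_2) \ne \emptyset$.

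To finish I would invoke separation. For any direction $x \in \sigma$, every member of $G_1 \subset F_\sigma^+$ lies in the open positive side of the central hyperplane $H_x$ and every member of $G_2 \subset F_\sigma^-$ lies in its open negative side; these open half-spaces are disjoint and convex, so $\conv G_1$ and $\conv G_2$ lie in disjoint half-spaces and $\conv G_1 \cap \conv G_2 = \emptyset$. Combined with the previous step, $(G_1, G_2)$ is a colorful Radon partition of $P$ that does not lift to a Radon partition of $F$, so $\varphi$ is not rainbow consistent. I expect no genuine obstacle here: the only content is the coordinate bookkeeping of the lift recorded in the display above, and the separation step is immediate from the construction of the central hyperplane.
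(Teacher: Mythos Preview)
Your argument is correct and follows essentially the same route as the paper's proof: split $Q$ along $Q_\sigma^\pm$, translate $0\in\conv(Q)$ into $\conv(A)\cap\conv(B)\neq\emptyset$ via the appended affine coordinate, and observe that the corresponding subfamilies of $F$ are strictly separated by the central hyperplane. The only difference is that you spell out the coefficient bookkeeping (the last coordinate forcing $\sum\lambda_p=\sum\mu_p=\tfrac12$) where the paper simply asserts the equivalence, and you are more careful to note $G_i\subset F_\sigma^\pm$ rather than equality.
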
 

\begin{proof}
	Let $Q_1 := -(Q \cap Q_\sigma^-)$ and $Q_2 := Q \cap Q_\sigma^+$. The fact that $0 \in \conv(Q)$ is equivalent to saying that $\conv (Q_1) \cap \conv(Q_2) \neq \emptyset$, hence if $0\in \conv(Q)$, then this corresponds to a colorful Radon partition $(P_1, P_2)$ of $P$ (by dropping the last coordinate). However, by definition of $Q_\sigma$, the subfamilies $\varphi^{-1}(P_1) = F_\sigma^-$ and $\varphi^{-1}(P_2) = F_\sigma^+$ are strictly separated by a central hyperplane. 
\end{proof} 

\subsection{The geometric join}
We are now ready to define the sets $S(\sigma)$. This will vary slightly depending on which case of Theorem \ref{thm:summary} we want to prove. 

\begin{define}[Geometric join]
	Let $A = A_1 \cup A_2 \cup\dots\cup A_r$ be an $r$-colored set of points in $\R^{k+1}$. The \df{geomteric join $\join(A)$} of $A$ is the set of all convex combinations of the form $t_1a_1 + t_2a_2 + \cdots + t_ra_r$ where $a_i\in A_i$.
\end{define} 

We need the following results from \cite{BHK2013}.

\begin{lemma}\label{lem:join}
	The geometric join of an $r$-colored point set in $\R^{k+1}$ is contractible in the following cases 
	\begin{itemize} 
		\item $k = 1$ and $r = 3$.
		\item $k = 2$ and $r = 4$.
		\item $k\geq 3$ and $r = \binom{k+2}{2} +1$.
	\end{itemize} 
\end{lemma}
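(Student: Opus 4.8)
The plan is to reduce the contractibility of $\join(A)$ to a purely combinatorial statement about a nerve, and then to spend the hypotheses relating $k$ and $r$ on verifying that this nerve is contractible. Throughout I assume, as in the rest of the proof, that each color class $A_i$ is finite, so that the rainbow simplices $\conv\{a_1,\dots,a_r\}$ (one vertex $a_i\in A_i$ of each color) form a \emph{finite} family of compact convex polytopes whose union is exactly $\join(A)$. Any intersection of rainbow simplices is again convex, hence contractible when nonempty, so they constitute a good cover and the Nerve Lemma (see \cite{Mat2003}) gives a homotopy equivalence $\join(A)\simeq N$, where $N$ is the nerve of the rainbow simplices. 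This has two payoffs: since the rainbow simplices are convex sets in $\R^{k+1}$, Wegner's theorem on nerves of convex families shows that $N$ is $(k+1)$-Leray, so $\tilde H_i(\join(A))=0$ automatically for all $i\ge k+1$; and contractibility of $\join(A)$ is now equivalent to showing that $N$ is connected, simply connected, and acyclic in the remaining degrees $1\le i\le k$.

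The first input toward killing the low-dimensional homology is the colorful Carath\'eodory theorem \cite{Bar1982}: if a point $x$ lies in $\conv(A_i)$ for every color $i$, then $x$ lies in some rainbow simplex, so $\bigcap_i\conv(A_i)\subseteq \join(A)$. In the two extremal cases the ambient dimension equals $r-1$ (we have $r=k+2$ colors in $\R^{k+1}$), the rainbow simplices are full-dimensional, and colorful Carath\'eodory controls enough of the picture to finish by hand. For the plane ($k=1$) I would argue directly: $\join(A)$ is connected because any two rainbow triangles are joined by a chain in which consecutive triangles, differing in a single vertex, share an edge; and it has no bounded complementary region, because a point $x\notin\join(A)$ must, by the contrapositive of colorful Carath\'eodory, lie outside $\conv(A_j)$ for some color $j$, and from a separating direction for $A_j$ one produces a path from $x$ to infinity missing $\join(A)$. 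A connected, hole-free compact planar set is contractible. The case $k=2$ in $\R^3$ is analogous but now demands vanishing of both $\tilde H_1$ and $\tilde H_2$, i.e. a genuinely two-dimensional ``no holes'' argument rather than the one-dimensional winding argument that suffices in the plane.

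For the general case $k\ge 3$, $r=\binom{k+2}{2}+1$, I would instead induct on $\sum_i\lvert A_i\rvert$. Adding one new point $a$ to a color class writes $\join(A_{\mathrm{new}})=\join(A_{\mathrm{old}})\cup C$, where $C=a*\join(\text{remaining classes})$ is a cone, hence contractible, and $\join(A_{\mathrm{old}})$ is contractible by the inductive hypothesis. By a Mayer--Vietoris / gluing argument it then suffices to prove that the overlap $\join(A_{\mathrm{old}})\cap C$ is contractible, and this is exactly where the bound on $r$ must be consumed: the overlap is again a union of convex pieces indexed by colorful data, and one needs enough surviving colors, in general position, to rerun the nerve/colorful argument and conclude that the overlap has no holes. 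Heuristically, $\binom{k+2}{2}$ counts the degrees of freedom (one per pair among a potential $(k+2)$-point colorful simplex) that must be pinned down to rule out a hole, which is why $\binom{k+2}{2}+1$ colors suffice.

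The main obstacle is precisely the verification that the nerve $N$ --- equivalently the gluing overlap in the induction --- is acyclic in the ``middle'' degrees $1\le i\le k$ that the Leray bound does not reach. The abstract join $A_1*\cdots*A_r$ is $(r-2)$-connected, but passing to its image in $\R^{k+1}$ can fold distinct simplices together and create lower-dimensional holes; ruling these out, quantitatively as a function of $k$, is the technical heart of the matter and is carried out in \cite{BHK2013}. I expect the extremal cases $r=k+2$ to be genuinely easier, with colorful Carath\'eodory alone nearly sufficing, and all the real difficulty concentrated in establishing the sharp constant $\binom{k+2}{2}+1$ for $k\ge 3$.
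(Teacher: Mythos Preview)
The paper does not prove this lemma; it is imported as a black box from \cite{BHK2013}, with no argument given here. There is therefore nothing in the present paper to compare your attempt against.

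Taken on its own, your sketch sets up a reasonable framework---the nerve of the rainbow simplices, with Wegner's Leray bound disposing of degrees $\ge k+1$---and you correctly isolate the difficulty in degrees $1\le i\le k$. But the one place where you actually argue, the planar case, has a gap. From $x\notin\conv(A_j)$ you obtain a half-plane containing $x$ and missing $A_j$, and you then assert that ``from a separating direction for $A_j$ one produces a path from $x$ to infinity missing $\join(A)$.'' This is not justified: a rainbow triangle has only one vertex in $A_j$, and its other two vertices may lie on $x$'s side of the separating line, so the triangle can protrude there and block the obvious ray. For instance, with $A_1=\{(0,10),(0,-10)\}$, $A_2=\{(1,0)\}$, $A_3=\{(-1,0)\}$, the point $(0.6,5)$ lies outside $\join(A)$ and outside $\conv(A_2)$, yet the ray from it in the direction of decreasing $x$ immediately enters the rainbow triangle $\conv\{(0,10),(1,0),(-1,0)\}$. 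Some escape route does exist in this example, but your text does not say which direction to choose or why it works in general, so as written this is a heuristic rather than a proof. For $k=2$ you give no argument, and for $k\ge3$ your induction explicitly defers the decisive overlap-contractibility step back to \cite{BHK2013}. In the end your proposal, like the paper itself, leans on that reference for the substance.
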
 

\subsubsection{The case $d = k+2$}
We show that $r(k+2,k) \leq \binom{k+2}{2}+1$ for $k\geq 3$, and $r(4,2)=4$. For every $\sigma \in \cal C$, the set $Q_\sigma$ is an $r$-colored point set in $\R^{k+1}$. Let $S(\sigma)=\join(Q_\sigma)$. By Lemma \ref{lem:mono-anti} the sets $S(\sigma)$ satisfy antipodality and monotonicity. Contractibility follows from Lemma \ref{lem:join} provided
\[
	r = \left\{
	\begin{array}{ll}
		4 & \mbox{if } k = 2 \\ \binom{k+2}{2}+1 & \mbox{if } k\geq 3 
	\end{array}\right.
\]
Lemma \ref{lem:hero} implies that the origin is contained in $S(\sigma)$ for some $\sigma\in \cal C$, and Lemma \ref{lem:norigin} implies that $\varphi\colon F \to P$ is not rainbow consistent. 

\subsubsection{The case $d = k+1$}
We show that $r(k+1,k)\leq 2(k+1)^2+3$. For every $\sigma \in \cal C$, the set $Q_\sigma$ is an $r$-colored point set in $\R^{k+1}$. Let $W$ be a hyperplane passing through the origin which strictly separates $P^-$ and $P^+$. Let $S(\sigma)=W\cap\join(Q_\sigma)$, note that $S(\sigma)$ is a subset of $\R^k$.

\begin{lemma}\label{lem:stars}
	If $r=2(k+1)^2+3$, then $S(\sigma)$ is star-shaped for every $\sigma \in \cal C$. 
\end{lemma}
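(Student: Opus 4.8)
The plan is to produce an explicit star center $c \in S(\sigma)$ and verify that every point of $S(\sigma)$ is visible from it. First I would make $S(\sigma)$ concrete. Taking $W = \{y\in\R^{k+1} : y_{k+1}=0\}$ (which indeed separates $P^-$ from $P^+$), a point of $\join(Q_\sigma)$ lies on $W$ exactly when the weight placed on the $P^+$-vertices equals that placed on the $P^-$-vertices. Projecting to the first $k$ coordinates, such a point has the form $\frac{1}{2}(a-b)$, where $a$ is a colorful convex combination of the points $p$ with $\varphi(p)\in F_\sigma^+$, where $b$ is a colorful convex combination of the points $p$ with $\varphi(p)\in F_\sigma^-$, and where the color supports of $a$ and $b$ are disjoint. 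Thus $S(\sigma)$ is a union of convex ``slice'' polytopes, one for each colorful transversal meeting both sides, and the natural route to star-shapedness is to find $c$ lying in enough slices and to represent the points of each segment $[c,s]$ inside $S(\sigma)$.

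To locate the center I would exploit the abundance of colors. By Claim \ref{claim:separations}\textit{(4)}, both $F_\sigma^-$ and $F_\sigma^+$ carry at least $\lceil r/2\rceil = (k+1)^2+2$ colors. Selecting one point of $P$ per color on each side gives two point sets in $\R^k$, each of size at least $(k+1)^2+2 > (k+1)(k+1)+1$, so by Tverberg's theorem each admits a partition into $k+2$ parts whose convex hulls share a common point; call these points $a^\ast$ (positive side) and $b^\ast$ (negative side). Each of $a^\ast,b^\ast$ therefore has $k+2$ colorful representations with pairwise disjoint color supports, every support of size at most $k+1$ after a final application of Carath\'eodory. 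Using the surplus colors I would choose representations of $a^\ast$ and $b^\ast$ with disjoint supports and set $c := \frac{1}{2}(a^\ast-b^\ast)\in S(\sigma)$.

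For visibility, fix $s=\frac{1}{2}(a-b)\in S(\sigma)$ and $\lambda\in[0,1]$, and write the segment point as $\frac{1}{2}(a_\lambda-b_\lambda)$ with $a_\lambda=(1-\lambda)a^\ast+\lambda a$ and $b_\lambda=(1-\lambda)b^\ast+\lambda b$. The task is to exhibit $a_\lambda$ as a colorful convex combination of positive points and $b_\lambda$ as one of negative points, with disjoint supports. Since $a^\ast$ is a Tverberg point of depth $k+2$, its $k+2$ color-disjoint representations let me pick one whose support avoids the few colors forced by $a$ (reduced to at most $k+1$ colors via Carath\'eodory) and by $b_\lambda$; blending it with $a$ yields a colorful representation of $a_\lambda$, and the symmetric choice handles $b_\lambda$. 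Balancing the colors that must be avoided against the $k+2$ color-disjoint parts available on each side is exactly what forces $\lceil r/2\rceil$ to be quadratic in $k$, that is, $r=2(k+1)^2+3$.

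The main obstacle is the disjoint-color coupling between the two sides. Were this constraint absent, $S(\sigma)$ would essentially be a Minkowski difference $\frac{1}{2}(\join^+-\join^-)$ of two individually star-shaped geometric joins, and star-shapedness from $c$ would follow immediately; the real difficulty is that the positive and negative representations must use disjoint colors simultaneously, and must continue to do so at \emph{every} point of the segment $[c,s]$. Guaranteeing colorful representations on both sides while keeping their supports disjoint throughout is the crux, and it is precisely the Tverberg depth of the center together with the surplus of $2(k+1)^2+3$ colors that is engineered to make this possible.
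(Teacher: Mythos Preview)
Your plan takes a different route from the paper and has a genuine gap at exactly the point you flag as ``the crux''. The paper does \emph{not} produce a single global star center; it invokes Krasnoselskii's theorem. Given any $k+1$ points $x_0,\dots,x_k$ of $S(\sigma)$, each lies in $W\cap\conv(X_i)$ for some colorful $X_i\subset Q_\sigma$ with $\lvert X_i\rvert\le k+1$. The union $X_0\cup\dots\cup X_k$ uses at most $(k+1)^2$ colors, so with $(k+1)^2+2$ colors available on each side one can pick $p_1\in Q_\sigma^-$ and $p_2\in Q_\sigma^+$ whose colors appear in no $X_i$; then every $X_i\cup\{p_1,p_2\}$ is still colorful, and the single point $p=W\cap p_1p_2$ sees each $x_i$ through the convex set $W\cap\conv(X_i\cup\{p_1,p_2\})\subset S(\sigma)$. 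The common visible point $p$ \emph{depends on the tuple} $(x_0,\dots,x_k)$; Krasnoselskii converts this local condition into global star-shapedness.

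Your explicit-center argument cannot close with the stated value of $r$. From $(k+1)^2+2$ colors on a side, Tverberg in $\R^k$ yields only $k+2$ pairwise color-disjoint representations of $a^\ast$ (likewise for $b^\ast$). For the blend $a_\lambda=(1-\lambda)a^\ast+\lambda a$ to be colorful, the chosen $a^\ast$-representation $A'$ must avoid the support $A$ of $a$ (size $\le k+1$); for the two sides to be color-disjoint you also need $(A\cup A')\cap(B\cup B')=\emptyset$, which in particular forces $A'\cap B=\emptyset$ and $A'\cap B'=\emptyset$. Thus $A'$ must avoid up to $3(k+1)$ colors, and since each forbidden color eliminates at most one of the $k+2$ disjoint options for $A'$, for every $k\ge 1$ all options may be blocked. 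Even the weaker requirement $A'\cap(A\cup B)=\emptyset$ already involves up to $2(k+1)\ge k+2$ forbidden colors. To make a fixed-center argument work you would need Tverberg depth of order $3(k+1)$ on each side, hence roughly $3(k+1)^2$ colors per side and $r$ on the order of $6(k+1)^2$; the value $r=2(k+1)^2+3$ is calibrated for the Krasnoselskii count, not for a single Tverberg-deep center.
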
 

\begin{proof}
	We will apply Krasnoselskii's theorem (see e.g. Theorem 11.2 in \cite{Eck1993}), and since $S(\sigma) \subset \R^{k}$ is compact, it suffices to show that every $k+1$ boundary points of $S(\sigma)$ are visible from a common point of $S(\sigma)$. Let $x_0$, $x_1$, $\dots$, $x_{k}$ be boundary points of $S(\sigma)$, that is, $x_i \in W \cap \conv(X_i)$ where $X_i$ is a colorful subset of $Q_\sigma$ with $\lvert X_i\rvert\leq k+1$. Recall that the points of $Q_\sigma$ are in bijection with the members of $F_\sigma^- \cup F_\sigma^+$, so by Claim \ref{claim:separations} \textit{(4)} it follows that $Q_\sigma^-$ and $Q_\sigma^+$ each contain points of at least $(k+1)^2+2$ distinct colors. Since $\lvert X_0 \cup \dots \cup X_{k}\rvert \leq (k+1)^2$ there exists a point $p_1 \in Q_\sigma^-$ such that $X_i \cup \{p_1\}$ is colorful for all $0\leq i \leq k$. Similarly, since $\lvert X_1 \cup \cdots \cup X_d \cup \{p_1\}\rvert \leq (k+1)^2+1$ there exists a point $p_2\in Q_\sigma^+$ such that $X_i \cup \{p_1, p_2\}$ is colorful for all $0\leq i \leq k$. Since $p_1\in Q_\sigma^-$ and $p_2\in Q_\sigma^+$ are strictly separated by $W$, the segment $p_1p_2$ intersects $W$ in a unique point, $p$, and since $\conv(X_i \cup \{p_1, p_2\}) \cap W$ is a convex subset contained in $S(\sigma)$ it follows that all the $x_i$ are visible from $p$.
\end{proof} 

By Lemmas \ref{lem:mono-anti} and \ref{lem:stars} the sets $S(\sigma)$ satisfy antipodality, monotonicity, and contractibility. So, by Lemma \ref{lem:hero} there is a $\sigma \in \cal C$ such that $S(\sigma)$ contains the origin, and therefore some colorful subset of $Q_\sigma$ contains the origin in its convex hull. By Lemma \ref{lem:norigin}, $\varphi: F \to P$ is not rainbow consistent.

\subsubsection{The case $d=2$}
We give an alternate proof to the Arocha-Bracho-Montejano theorem which states that $r(2,1)=3$. For every $\sigma\in\cal C$, the set $Q_\sigma$ is a $3$-colored point set in $\R^2$. Let $W$ be a line through the origin that strictly separates $P^-$ and $P^+$. Let $S(\sigma)$ be the convex hull of $W\cap\join(Q_\sigma)$.

\begin{lemma}\label{lem:d=2}
	If $S(\sigma)$ contains the origin, then $\join(Q_\sigma)$ also contains the origin.
\end{lemma}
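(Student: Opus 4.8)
The plan is to reduce the statement to a connectivity property of the slice $I:=W\cap\join(Q_\sigma)$ and then establish that property from the structure of $Q_\sigma$. Since $W$ is a line through the origin we have $0\in W$, and $S(\sigma)=\conv\bigl(W\cap\join(Q_\sigma)\bigr)$ is the convex hull of a subset of the line $W$, hence an interval. A connected subset of a line is an interval and therefore equals its own convex hull, so if I can show that $I$ is connected, then $0\in S(\sigma)=\conv(I)=I\subseteq\join(Q_\sigma)$, which is exactly what the lemma asserts. Thus everything comes down to proving that $I$ is connected.

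First I would record the relevant geometry. By construction $W$ strictly separates $P^-$ from $P^+$, and since $Q_\sigma^-\subseteq P^-$ and $Q_\sigma^+\subseteq P^+$, every vertex of $Q_\sigma^+$ lies on the positive side of $W$ and every vertex of $Q_\sigma^-$ on the negative side. Consequently a rainbow triple --- a choice of one point from each of the three colour classes of $Q_\sigma$ --- meets $W$ if and only if it has vertices on both sides, in which case $\conv(\text{triple})\cap W$ is a segment whose two endpoints are the points where the two ``bichromatic'' edges (each joining a vertex of $Q_\sigma^+$ to a vertex of $Q_\sigma^-$) cross $W$. Hence $I$ is a finite union of such segments, and it suffices to show that this union is connected.

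To prove connectivity I would introduce the swap graph on the crossing rainbow triples: join two such triples by an edge when they differ in a single colour and their two common vertices lie on opposite sides of $W$. For adjacent triples the common (straddling) edge crosses $W$ in a single point that is an endpoint of both associated segments, so adjacent triples contribute overlapping segments; therefore connectivity of the swap graph implies connectivity of $I$. Connectivity of the swap graph is where I would invoke Claim~\ref{claim:separations}~(4): for $r=3$ we have $\lceil r/2\rceil=2$, so each of $F_\sigma^-$ and $F_\sigma^+$ carries at least two of the three colours, and since together they carry all three (part (3)) they must share at least one colour. Fixing such a pivot colour, which is present on both sides of $W$, I can place its vertex on whichever side is needed so that the other two vertices straddle $W$, and thereby transform any crossing triple into a fixed reference triple by a sequence of admissible single-colour swaps.

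The hard part is precisely this last step --- organising the swaps so that at every stage two retained vertices straddle $W$. A colour occurring on only one side of $W$ can never be moved across, so to change the vertex of such a colour one is forced to re-route through the pivot colour, and verifying that this can always be arranged (over the possible distributions of the three colours across $W$) is the real content of the argument. That the $\lceil r/2\rceil$-bound is indispensable here can be seen from a configuration it rules out: if two colours were confined to one side of $W$ and the third colour sat at two widely separated points of the other side, then $I$ would consist of two isolated points whose convex hull nonetheless contains the origin while $\join(Q_\sigma)$ does not. It is exactly Claim~\ref{claim:separations}~(4) that forbids this configuration and forces $I$ to be connected.
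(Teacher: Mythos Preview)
Your reduction is sound: since $S(\sigma)=\conv(I)$ with $I=W\cap\join(Q_\sigma)\subset W$, showing that $I$ is connected would give $0\in I\subset\join(Q_\sigma)$ directly. But your route is genuinely different from the paper's and, as written, is incomplete at the decisive step.

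The paper does \emph{not} try to prove that $I$ is connected. Instead, from $0\in\conv(I)$ it extracts two colourful edges $x_1x_3$ and $x_2x_4$ (with $x_1,x_2\in Q_\sigma^+$ and $x_3,x_4\in Q_\sigma^-$) whose crossings with $W$ lie on opposite sides of the origin. Claim~\ref{claim:separations}\,(4) guarantees that each of $Q_\sigma^\pm$ carries at least two colours, so $\join(Q_\sigma^+)$ and $\join(Q_\sigma^-)$ are connected; hence there are paths $x_1\!\leadsto\! x_2$ in $\join(Q_\sigma^+)$ and $x_3\!\leadsto\! x_4$ in $\join(Q_\sigma^-)$, both disjoint from $W$. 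Concatenating with the two edges gives a closed loop in $\join(Q_\sigma)$ that winds around the origin, and since $\join(Q_\sigma)$ is contractible (Lemma~\ref{lem:join} with $k=1$, $r=3$) the origin must lie in it. Thus the paper trades the combinatorics you are attempting for a short winding-number argument plus a black-box appeal to Lemma~\ref{lem:join}.

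Your swap-graph strategy, if completed, would be more self-contained (it would not need Lemma~\ref{lem:join}), but you explicitly stop short of the verification: you write that ``verifying that this can always be arranged\ldots is the real content of the argument'' and leave it there. The missing case analysis is real. For instance, to swap the pivot colour when the other two vertices sit on the same side of $W$, you must first show that at least one of the other two colours is itself present on both sides (this follows from Claim~\ref{claim:separations}\,(4): otherwise the far side would carry only one colour), move that auxiliary colour across, perform the pivot swap, and move it back. Similar detours are needed to ensure intermediate triples remain crossing when you attempt to change colours one at a time. None of this is hard, but none of it is written, so the proof as it stands has a gap precisely where the work lies.
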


\begin{proof}
	Assume that $S(\sigma)$ contains the origin. There are point $x_1,x_2\in Q^+_\sigma$ and $x_3,x_4\in Q^-_\sigma$ such that $\{x_1,x_3\}$ and $\{x_2,x_4\}$ are both colorful subsets of $Q_\sigma$ and the segments $x_1x_3$ and $x_2x_4$ intersect $W$ on different sides of the origin. By Claim \ref{claim:separations} {\em (4)}, $Q_\sigma^+$ and $Q_\sigma^-$ each contain at least 2 distinct colors, and it follows that $\join(Q_\sigma^+)$ and $\join(Q_\sigma^-)$ are both connected. Hence there is a path connecting $x_1$ to $x_2$ contained in $\join(Q_\sigma^+)$, and a path connecting $x_3$ to $x_4$ in $\join(Q_\sigma^-)$. By combining these paths together with the segments $x_1x_3$ and $x_2x_4$, we obtain a closed cycle contained in $\join(Q_\sigma)$ which encloses the origin. Since $\join(Q_\sigma)$ is contractible by Lemma \ref{lem:join} with $k=1$, $\join(Q_\sigma)$ contains the origin.
\end{proof}

By Lemmas \ref{lem:mono-anti} and \ref{lem:d=2}, $S(\sigma)$ satisfies the hypothesis of Lemma \ref{lem:hero}. Therefore there is some $\sigma \in \cal C$ such that $S(\sigma)$ contains the origin and Lemma \ref{lem:norigin} implies that $\varphi: F \to P$ is not rainbow consistent.

\section{Final remarks}

\subsection{The case \texorpdfstring{$d=3$}{d=3}}

The case $d=k+1$ is the most interesting one and, by comparing with other colorful theorems, we expect $r(k+1,k)=k+2$. This is in fact true for $k=1$ as was shown by Arocha, Bracho and Montejano, however the value of $r(3,2)$ remains unknown and we were unable to adapt out methods to determine its value. It seems that this problem is strongly connected to the topology of geometric joins.

\subsection{Matroids}

Theorem \ref{thm:summary} can be generalized further by using a matroid instead of colors. Let $\cal M$ be a simple matroid of rank $r$ with rank function $\rk(\cdot)$, whose ground set is the family $F$. The colorful case occurs when $\cal M$ is a partition matroid. That is, when the elements of $F$ are partitioned into non-empty sets $F=F_1\cup\dots\cup F_r$ and a subset $G\subset F$ is independent if and only if $\lvert G\cap F_i\rvert\le 1$ for all $1\le i\le r$.

The notions of \df{independent Radon partition} and \df{independent consistent $k$-ordering} can be defined in terms of $\cal M$ in a natural way. The number $\overline r=\overline r(d,k)$ can be defined as the smallest integer such that if $\cal M$ is a rank $\overline r$ matroid on a family $F$ of compact connected sets in $\R^d$ with an independent consistent $k$-ordering, then there is a subfamily $G$ of $F$ such that $\rk(F\setminus G)<\overline r$ and $G$ has a hyperplane transversal.

\begin{theorem}
	For the function $\overline r(d,k)$ the following bounds hold. 
	\begin{itemize} 
		\item $\overline r(3,2) = 3$.
		\item $\overline r(k+1,k) \leq 2(k+1)^2+3$.
	\end{itemize} 
\end{theorem}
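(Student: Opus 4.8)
The plan is to rerun the entire machinery of Section~2 with ``number of distinct colors'' replaced throughout by ``rank in $\mathcal M$'' and ``colorful'' replaced by ``independent in $\mathcal M$''. The reduction to finite families of polytopes, the cell complex $\mathcal C$ on $\Ss^{d-1}$, and Claim~\ref{claim:neib} are color-free and survive unchanged. The first real step is to redefine the central hyperplane: sweeping an oriented hyperplane with normal $x$, let $H_1$ be the first position at which the closed negative side attains rank $\geq \lceil \overline r/2\rceil$ in $\mathcal M$, and $H_2$ the symmetric object obtained from the opposite direction. Assuming that no subfamily $G$ with $\rk(F\setminus G)<\overline r$ has a hyperplane transversal, subadditivity of the rank function yields the matroid analogue of Claim~\ref{claim:separations}: parts \textit{(1)}, \textit{(2)}, \textit{(5)} are literally unchanged; part \textit{(3)} becomes $\rk(F_x^-\cup F_x^+)=\overline r$, since otherwise the members meeting $H_x$ form a subfamily $G$ with transversal $H_x$ and $\rk(F\setminus G)<\overline r$; and part \textit{(4)} becomes $\rk(F_x^\pm)\geq \lceil \overline r/2\rceil$, since otherwise $\rk(F_1\cup F_2)\leq \rk F_1+\rk F_2<\overline r$ would exhibit such a $G$ sitting on $H_1$. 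Claims~\ref{claim:cells}, \ref{claim:monot} and Lemmas~\ref{lem:mono-anti}, \ref{lem:norigin} then transfer verbatim once ``colorful Radon partition'' is read as ``independent Radon partition'', the restriction of $\mathcal M$ to $Q_\sigma$ supplying the notion of independent subset.

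For the bound $\overline r(k+1,k)\leq 2(k+1)^2+3$ I would take $S(\sigma)=W\cap\join_\mathcal{M}(Q_\sigma)$, where now $\join_\mathcal{M}$ ranges over the \emph{independent} subsets of $\mathcal M$, and reprove Lemma~\ref{lem:stars}. The only new ingredient is combinatorial: given $k+1$ independent sets $X_0,\dots,X_k$ with $\lvert X_i\rvert\leq k+1$, their union spans a flat of rank at most $(k+1)^2$ by subadditivity; since $\lceil \overline r/2\rceil=(k+1)^2+2$, each of $Q_\sigma^-$ and $Q_\sigma^+$ has rank exceeding $(k+1)^2+1$, so I can successively choose $p_1\in Q_\sigma^-$ and $p_2\in Q_\sigma^+$ lying outside the relevant closures, keeping every $X_i\cup\{p_1,p_2\}$ independent. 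The Krasnoselskii visibility argument then runs word for word, and Lemmas~\ref{lem:hero} and \ref{lem:norigin} close the case. (Note that simplicity is not needed here, which is consistent with this bound also holding in the colorful setting of Section~2.)

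The case $\overline r(3,2)=3$ is where the genuine work lies, and where simplicity is indispensable. The lower bound $\overline r(3,2)\geq 3$ is easy: the uniform matroid $U_{2,n}$ on $n\geq 5$ points in general position in $\R^3$, mapped to a generic $P\subset\R^2$, is simple and has no independent Radon partitions (an independent set has at most two elements, so an independent Radon partition would force two distinct points to coincide), hence it is vacuously independent-consistent; yet every subfamily $G$ with $\rk(F\setminus G)<2$ satisfies $\lvert G\rvert\geq n-1\geq 4$, and no plane contains four points in general position, so no admissible $G$ exists. For the upper bound I would imitate the two-dimensional argument of Lemma~\ref{lem:d=2}: set $S(\sigma)=\conv\bigl(W\cap\join_\mathcal{M}(Q_\sigma)\bigr)$, a planar convex set, and show that if it contains the origin then, using that $\join_\mathcal{M}(Q_\sigma^-)$ and $\join_\mathcal{M}(Q_\sigma^+)$ are connected (each side has rank $\geq 2$), one can assemble from segments across $W$ and paths through these connected pieces a surface inside $\join_\mathcal{M}(Q_\sigma)$ enclosing the origin, forcing $0\in\join_\mathcal{M}(Q_\sigma)$ and hence $0\in\conv(Q)$ for some independent $Q$.

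The hard part, and the main obstacle, is precisely the topological input this last step demands: the contractibility (or at least the $2$-connectivity needed to contract the enclosing surface) of the geometric join of a \emph{spanning simple rank-$3$} matroid in $\R^3$. This is not provided by Lemma~\ref{lem:join} --- for three colors, i.e.\ a non-simple rank-$3$ partition matroid, the join need not be contractible, which is exactly why $r(3,2)$ is known to exceed $3$. So the crux is to prove that simplicity of $\mathcal M$ removes the offending non-contractibility, presumably by a direct analysis of rank-$3$ geometric joins along the lines of \cite{BHK2013}; everything else in the case $\overline r(3,2)=3$ is then routine bookkeeping inside the framework above.
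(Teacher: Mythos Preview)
Your proposal is correct and matches the paper's approach exactly: the paper omits the proof entirely, saying only that it is ``almost identical to the proof of Theorem~\ref{thm:summary}'', i.e.\ rerun Section~2 with matroid rank replacing color count and independent sets replacing colorful ones --- precisely what you outline. You in fact supply considerably more detail than the paper does, correctly identifying subadditivity of rank as the replacement for the pigeonhole-on-colors steps, giving an explicit lower-bound construction for $\overline r(3,2)\geq 3$, and isolating the one genuinely new ingredient (contractibility of the geometric join of a \emph{simple} rank-$3$ matroid in $\R^3$) as the input that must come from \cite{BHK2013}; this is exactly the shape of what the paper leaves implicit.
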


The proof of this theorem is almost identical to the proof of Theorem \ref{thm:summary}, so we omit it.

\end{document}